\newtheorem{theorem}{Theorem}[section]
\newtheorem{prop}[theorem]{Proposition}
\newtheorem{lemma}[theorem]{Lemma}
\newtheorem{conjecture}[theorem]{Conjecture}
\theoremstyle{remark}
\theoremstyle{definition}
\newtheorem{definition}[theorem]{Definition}
\newcommand{\CC}{\mathcal{C}}
\newcommand{\Av}{\operatorname{Av}}
\newcommand{\Si}{\operatorname{Si}}
\author{Alexander Burstein}
\address{Department of Mathematics, Howard University, Washington, DC 20059}
\email{aburstein@howard.edu}
\urladdr{http://www.alexanderburstein.org}
\author{Jay Pantone}
\thanks{The second author\text{'}s research was sponsored by the National Science Foundation under Grant Number DMS-1301692.}
\address{Department of Mathematics, University of Florida, Gainesville, FL 32611}
\email{jaypantone@ufl.edu}
\urladdr{http://www.jaypantone.com}
\title{Two Examples of Unbalanced Wilf-Equivalence}
\date{\today}
\begin{document}

\begin{abstract}
We prove that the set of patterns $\{1324,3416725\}$ is Wilf-equivalent to the pattern $1234$ and that the set of patterns $\{2143,3142,246135\}$ is Wilf-equivalent to the set of patterns $\{2413,3142\}$. These are the first known unbalanced Wilf-equivalences for classical patterns between finite sets of patterns.
\end{abstract}

\maketitle

A \emph{pattern} is an equivalence class of sequences under order-isomorphism. Two sequences $\pi_1$ and $\pi_2$ over totally ordered alphabets are \emph{order-isomorphic} if, for any pair of positions $i$ and $j$, we have $\pi_1(i)<\pi_1(j)$ if and only if $\pi_2(i)<\pi_2(j)$.  We identify a pattern with its canonical representative, in which the $k$th smallest letter is $k$. We say that a permutation $\pi$ \emph{contains} a pattern $\sigma$ if $\pi$ has a subsequence order-isomorphic to $\sigma$, otherwise we say that $\pi$ \emph{avoids} $\sigma$.

We denote the set of permutations of length $n$ by $S_n$, the set of permutations in $S_n$ avoiding pattern $\pi$ by $\Av_n(\pi)$, and the set of permutations in $S_n$ avoiding every pattern in a set $\Pi$ by $\Av_n(\Pi)$. We say that two (sets of) patterns $\pi'$ and $\pi''$ are \emph{Wilf-equivalent}, denoted $\pi'\sim\pi''$, if $|\Av_n(\pi')|=|\Av_n(\pi'')|$ for all $n\in\mathbb{N}$. We call a Wilf-equivalence \emph{unbalanced} if the two sets of patterns do not contain the same number of patterns of each length. We will sometimes talk about the \emph{type} of an unbalanced Wilf-equivalence, defined by the lengths of the patterns: for example, the two Wilf-equivalences proved in this paper, $1234 \sim \{1324,3416725\}$ and $\{2413,3142\} \sim \{2143,3142,246135\}$, have type $(4) \sim (4,7)$ and $(4,4) \sim (4,4,6)$, respectively.

In Section \ref{sec:4-47}, we prove that $\{1324,3416725\}\sim 1234$. In Section \ref{sec:44-446}, we prove that\linebreak $\{2143,3142,246135\}\sim\{2413,3142\}$. In Section \ref{sec:conj}, we conjecture a few other unbalanced Wilf-equivalences.

\section{A $(4)\sim(4,7)$ Wilf-Equivalence} \label{sec:4-47}

\begin{theorem} \label{thm:main}
$\{1324,3416725\}\sim 1234$.
\end{theorem}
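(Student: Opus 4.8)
The plan is to establish the Wilf-equivalence $\{1324,3416725\}\sim 1234$ by constructing a size-preserving bijection between $\Av_n(1324,3416725)$ and $\Av_n(1234)$, or failing that, by showing the two counting sequences satisfy the same recurrence or functional equation. The natural first move is to find a structural decomposition of $\Av_n(1234)$ that is amenable to comparison. A permutation avoids $1234$ precisely when it has no increasing subsequence of length $4$, so by the Erd\H{o}s--Szekeres/RSK circle of ideas its longest increasing subsequence has length at most $3$; one can stratify such permutations by the positions of left-to-right maxima, or use the classical ``two decreasing sequences plus a bit more'' type of description. I would try to set up a generating tree or a transfer-matrix-style encoding for $\Av_n(1234)$ first, since its structure is well understood.

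Next I would analyze the class $\Av_n(1324,3416725)$. The key observation to exploit is that $3416725$ contains $1324$-like substructure, so adding it as a second forbidden pattern only prunes $1324$-containing permutations in a controlled way: a permutation avoiding $3416725$ but containing $1324$ must have every $1324$-occurrence ``isolated'' in a precise combinatorial sense (no extra points that would complete it to the length-$7$ pattern). I would aim to partition $\Av_n(1324,3416725)$ into those permutations that avoid $1324$ outright (which form exactly $\Av_n(1234)$'s ``cousin'' class $\Av_n(1324)$ — but note $\Av_n(1324)\ne\Av_n(1234)$ in general!) together with those that contain $1324$ but avoid the longer pattern. So the real content is: the number of permutations in $\Av_n(1324)$ that contain a $1324$ but avoid $3416725$ must equal $|\Av_n(1234)| - |\Av_n(1324)|$ when the latter is positive, and more precisely the full count has to telescope. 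I suspect the cleaner route is a direct bijection that locally modifies each occurrence of $1324$ in a $1234$-avoider to produce a $\{1324,3416725\}$-avoider and vice versa, with the length-$7$ pattern being exactly the obstruction that makes the local move reversible.

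The main obstacle, as I see it, will be controlling the interaction between multiple occurrences of the small patterns: a single permutation can contain many copies of $1324$ (resp.\ $1234$), and a naive ``fix one occurrence'' map will not be well-defined or invertible. I expect the proof to hinge on identifying a canonical occurrence — e.g.\ the leftmost, bottommost, or otherwise extremal $1324$ — and showing that the rewriting at that distinguished spot does not create new uncontrolled occurrences, while the forbidden pattern $3416725$ is precisely engineered so that after rewriting, no $1234$ (resp.\ $1324$) appears elsewhere that would spoil the bijection. Verifying this ``no bad side effects'' claim is the crux; it will likely require a careful case analysis on how points outside the distinguished occurrence sit relative to its four points.

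If a clean bijection proves elusive, the fallback plan is generatingfunctionological: derive a system of functional equations for $\Av_n(1324,3416725)$ via a suitable catalytic-variable decomposition (tracking, say, the value or position of the last entry, or the shape of the ``staircase'' of left-to-right maxima), solve or simplify it, and match it against the known algebraic/functional-equation description of $|\Av_n(1234)| = 2\sum_{k}\binom{2k}{k}\binom{n}{k}^2 \frac{3k^2+2k+1-n-2kn}{(k+1)^2(k+2)(n-k+1)}$ (the Gessel formula). Matching closed forms or $D$-finite equations is less illuminating but would still complete the proof; I would pursue the bijective approach first and only retreat to this if the combinatorics of overlapping occurrences becomes intractable.
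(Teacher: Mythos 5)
There is a genuine gap: what you have written is a research plan, not a proof, and the crucial step is exactly the one you defer. Your primary route --- a local rewriting that converts a canonical occurrence of $1324$ into something $1234$-related, with $3416725$ ``engineered'' to make the move reversible --- is never constructed, and you yourself flag the ``no bad side effects'' verification as the crux without offering any mechanism to carry it out. The paper's actual argument sidesteps this overlapping-occurrences problem entirely by a different idea: to each permutation $\sigma$ it associates a skew-Ferrers board $B_\sigma$ (delete everything not trapped between a left-to-right minimum and a right-to-left maximum, together with those extremal entries), proves that $\sigma$ avoids $1234$ iff $B_\sigma$ avoids $B_{1234}$ and that $\sigma$ avoids $\{1324,3416725\}$ iff $B_\sigma$ avoids $\{B_{1324},B_{3416725}\}$, and then shows that every such ``nice'' board admits exactly one permutation-matrix filling avoiding $B_{1234}$ (B\'ona's lemma: the antidiagonal filling) and exactly one avoiding $(B_{1324},B_{3416725})$ (a greedy filling, with a uniqueness argument in which the extra pattern $B_{3416725}$ is precisely what kills the alternative placements). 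Grouping permutations by their common board then gives the equality of cardinalities. The role of the length-$7$ pattern is thus to force uniqueness of a board filling, not to make a local surgery on occurrences reversible; nothing in your sketch supplies a substitute for this.

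Two further problems. First, your reduction paragraph is garbled: permutations in $\Av_n(1324)$ cannot ``contain a $1324$,'' and the quantity you would actually need is $|\Av_n(1324)|-|\Av_n(1234)|$ (not its negative, since B\'ona showed $|\Av_n(1324)|>|\Av_n(1234)|$ for $n\ge 7$), namely the number of $1324$-avoiders that contain $3416725$; you give no handle on this count. Second, the generating-function fallback is not viable as stated: you would need a catalytic decomposition of $\Av(1324,3416725)$, and no such tractable decomposition is offered or known independently of the board structure --- indeed the difficulty of enumerating $\Av(1324)$ itself is a warning that ``set up a functional equation and match it to Gessel's formula'' is not a routine step. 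As it stands the proposal identifies the right obstacles but contains no argument that overcomes them.
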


We note that this is not the first Wilf-equivalence between a singleton pattern and a set of more than one pattern. In \cite{AMR}, the authors proved that the pattern $1342$ is Wilf-equivalent to the \emph{infinite} set of patterns $B=\{(2,2m-1,4,1,6,3,8,5,\dots,2m,2m-3)\ |\ m=2,3,4,\dots\}$. However, we prove a Wilf-equivalence between a singleton pattern and a \emph{finite} set of more than one pattern. Our proof is an extension of an idea of B\'{o}na \cite{Bona} in his proof that $|\Av_n(1324)|>|\Av_n(1234)|$ for $n\ge 7$. In other words, we shall reduce the bijection on permutations to a bijection on certain $(0,1)$-filled skew-Ferrers boards.

Given a permutation $\sigma\in S_n$, consider an $n\times n$ board $M_\sigma$ filled with 0's and 1's, so that the 1's are in cells in column $i$ (from left to right) and row $\sigma(i)$ (from bottom to top), for $1\le i\le n$, and 0's are in all other positions.

We say that $\sigma$ has a \emph{left-to-right (LR) minimum} at position $i$ if $\sigma(j)>\sigma(i)$ for all $j<i$. Likewise, we say that $\sigma$ has a \emph{right-to-left (RL) maximum} at position $i$ if $\sigma(j)<\sigma(i)$ for all $j>i$. Note that every RL-maximum is either a LR-minimum or is above and to the right of a LR-minimum. Additionally, the leftmost entry of a permutation is always a LR-minimum and the rightmost entry is always a RL-maximum.

Given a permutation matrix $M_\sigma$, remove all the boxes of $M_\sigma$ that are not both above and to the right of some LR-minimum and below and to the left of some RL-maximum, as well as the rows and columns of $M_\sigma$ containing the LR-minima and RL-maxima. Denote the resulting (0,1)-filled board by $B_\sigma$. Note that different permutations $\sigma$ may give rise to the same $B_\sigma$, e.g., $B_{23145}=B_{1234}$.

\begin{center}
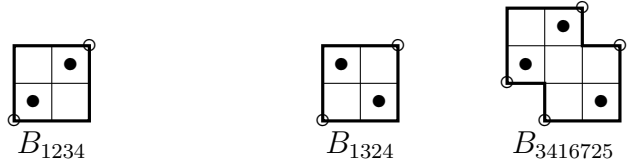
\begin{figure}[!ht]
\begin{tikzpicture}[scale=0.5]

\draw (1,0) -- (1,2);
\draw (0,1) -- (2,1);

\draw[very thick] 
     (0,0)
-- ++(0,2)
-- ++(2,0)
-- ++(0,-2)
-- cycle;

\draw (0.5,0.5)
+(0,0) node {$\bullet$} %{1}
%+(1,0) node {0}
%+(0,1) node {0}
+(1,1) node {$\bullet$} %{1}
;

\draw[color=black] (0,0)
+(0,0) node {$\circ$} %{1}
+(2,2) node {$\circ$} %{1}
;

\draw (1,0) node[below] {$B_{1234}$};

\end{tikzpicture} \qquad \qquad \qquad
\begin{tikzpicture}[scale=0.5]

\draw (1,0) -- (1,2);
\draw (0,1) -- (2,1);

\draw[very thick] 
     (0,0)
-- ++(0,2)
-- ++(2,0)
-- ++(0,-2)
-- cycle;

\draw (0.5,0.5)
%+(0,0) node {0}
+(1,0) node {$\bullet$} %{1}
+(0,1) node {$\bullet$} %{1}
%+(1,1) node {0}
;

\draw[color=black] (0,0)
+(0,0) node {$\circ$} %{1}
+(2,2) node {$\circ$} %{1}
;

\draw (1,0) node[below] {$B_{1324}$};

\end{tikzpicture} \qquad
\begin{tikzpicture}[scale=0.5]

\foreach \x in {1,2}
{
\draw (\x,0) -- (\x,3);
\draw (0,\x) -- (3,\x);
}

\draw[very thick] 
     (0,1)
-- ++(0,2)
-- ++(2,0)
-- ++(0,-1)
-- ++(1,0)
-- ++(0,-2)
-- ++(-2,0)
-- ++(0,1)
-- cycle;

\draw (0.5,0.5)
+(0,1) node {$\bullet$} %{1}
%+(0,2) node {0}
%+(1,0) node {0}
%+(1,1) node {0}
+(1,2) node {$\bullet$} %{1}
+(2,0) node {$\bullet$} %{1}
%+(2,1) node {0}
;

\draw[color=black] (0,0)
+(0,1) node {$\circ$} %{1}
+(1,0) node {$\circ$} %{1}
+(2,3) node {$\circ$} %{1}
+(3,2) node {$\circ$} %{1}
;

\draw (1.5,0) node[below] {$B_{3416725}$};

\end{tikzpicture}
\caption{Boards $B_{1234}$, $B_{1324}$, and $B_{3416725}$. Positions of 1's are denoted by the dots. The remaining positions are filled with 0's. Positions of the corresponding (removed) LR-minima and RL-maxima are denoted by circles.}
\end{figure}
\end{center}

It is easy to see that $B_\sigma$ is a (possibly empty) skew-Ferrers board with an equal number of rows and columns that contains the antidiagonal. (We will call such boards \emph{nice}.) This is because if $B_\sigma$ contains cells $(a,b)$ and cell $(c,d)$ (with rows and columns re-numbered consecutively after the cell removal as above) then $B_\sigma$ contains all cells $(k,l)$ with $a\le k\le c$ and $b\le l\le d$. To see that $B_\sigma$ contains the antidiagonal, suppose that an antidiagonal cell $c$ with coordinates $(j,r-j+1)$ is not in $B_\sigma$, where $r$ is the number of rows and columns in $B_\sigma$. Then we have two possibilities: (1) there is no cell in $B_\sigma$ either below or to the left of $c$ (or both), or (2) there is no cell in $B_\sigma$ either above or to the right of $c$ (or both). Without loss of generality, assume the former. Then every 1 in the $j$ leftmost rows of $B_\sigma$ is also contained in the $j-1$ topmost columns of $B_\sigma$ (from $r-j+2$ through $r$), which is impossible.

Define avoidance on boards as follows.

\begin{definition} \label{def:pattern-board}
A nice board $B$ \emph{contains} a nice board $B'$ if $B'$ can be obtained from $B$ by deleting an equal number of rows and columns. Otherwise, we say that $B$ \emph{avoids} $B'$.
\end{definition}

We note the following fact.

\begin{lemma} \label{lem:mb} ~\\[-10pt]
\begin{enumerate}
\item A permutation $\sigma$ avoids pattern $1234$ if and only if $B_\sigma$ avoids pattern $B_{1234}$%$={\tiny \begin{bmatrix}0&1\\1&0\end{bmatrix}}$
.
\item A permutation $\sigma$ avoids patterns $1324$ and $3416725$ if and only if $B_\sigma$ avoids patterns $B_{1324}$ %$={\tiny \begin{bmatrix}1&0\\0&1\end{bmatrix}}$ 
 and $B_{3416725}$%$={\tiny \begin{bmatrix}0&1&\phantom{0}\\1&0&0\\\phantom{0}&0&1\end{bmatrix}}$
.
\end{enumerate}
\end{lemma}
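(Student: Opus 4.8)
The plan is to establish a correspondence between occurrences of a pattern $\tau$ in $\sigma$ and occurrences of the board $B_\tau$ in $B_\sigma$, at least for the three patterns $\tau \in \{1234, 1324, 3416725\}$ that concern us. The key structural observation is that each of these three patterns has the property that, in its own permutation matrix, its first and last letters form (respectively) the unique LR-minimum and the unique RL-maximum that survive the reduction — more precisely, when we form $B_\tau$ we strip away exactly the LR-minima and RL-maxima, and for these patterns the cells that remain are precisely the ``middle'' cells. One should begin by recording this: for $\tau = 1234$, the reduced board $B_{1234}$ is the $2\times 2$ staircase with $1$'s on its main diagonal; for $\tau=1324$ it is the $2\times2$ staircase with $1$'s on its antidiagonal; and for $\tau = 3416725$ the reduced board $B_{3416725}$ is the skew shape depicted in Figure 1. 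These explicit descriptions are what we will match against.

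For the forward direction of part (1): if $\sigma$ contains $1234$, take an occurrence $\sigma(i_1)<\sigma(i_2)<\sigma(i_3)<\sigma(i_4)$ with $i_1<i_2<i_3<i_4$; we may choose it so that $\sigma(i_1)$ is a LR-minimum of $\sigma$ and $\sigma(i_4)$ is a RL-maximum of $\sigma$ (replace $\sigma(i_1)$ by the last LR-minimum weakly left of $i_1$ and below $\sigma(i_2)$, and symmetrically for $\sigma(i_4)$; one checks this preserves the increasing condition). Then the cells carrying $\sigma(i_2)$ and $\sigma(i_3)$ both lie above-and-right of a LR-minimum and below-and-left of a RL-maximum, so they survive into $B_\sigma$, and after renumbering rows and columns they exhibit a copy of $B_{1234}$ (two cells, the second strictly above and strictly right of the first, with the surrounding skew-Ferrers condition automatically inherited). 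Conversely, a copy of $B_{1234}$ inside $B_\sigma$ gives two cells of $M_\sigma$, the NE one above-right of the SW one; by the defining property of $B_\sigma$ the SW cell is above-right of some LR-minimum and the NE cell is below-left of some RL-maximum, and stringing these four together produces a $1234$ in $\sigma$. Part (2) for $1324$ is the same argument with ``increasing'' replaced by the $1324$-shape, noting $B_{1324}$ records exactly the relative position of the ``$3$'' and the ``$2$''.

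The genuinely laborious step — and the one I expect to be the main obstacle — is the pattern $3416725$. Here one must verify two things carefully. First, that if $\sigma$ contains $3416725$ then one can always slide the occurrence so that its outer letters become LR-minima/RL-maxima of $\sigma$ and the inner letters $1,6,7,2$ (the entries that map to cells of $B_{3416725}$) survive the reduction with the correct relative board shape; this requires checking that the ``$3,4$'' at the front can be pushed down to LR-minima and the ``$5$'' at the back pulled out to a RL-maximum without destroying the pattern, using the fact that every RL-maximum is weakly NE of some LR-minimum. Second, and dually, that any copy of the board $B_{3416725}$ inside $B_\sigma$ can be completed, by reinserting suitable LR-minima and RL-maxima of $\sigma$ lying in the deleted rows and columns, to a genuine $3416725$-occurrence in $\sigma$ — the point being that the skew-Ferrers/``nice'' condition on $B_\sigma$, together with the presence of the antidiagonal, guarantees enough LR-minima below-left and RL-maxima above-right of the four surviving cells to play the roles of $3,4$ and $5$. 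I would organize this as a short case analysis on which of the four cells of $B_{3416725}$ the matched cells occupy, but would first isolate a general lemma: a cell of $M_\sigma$ survives into $B_\sigma$ iff it lies strictly NE of some LR-minimum and strictly SW of some RL-maximum, so that membership questions about $B_\sigma$ translate directly into the existence of such flanking extremal entries in $\sigma$. With that lemma in hand, both directions of both parts reduce to bookkeeping of the kind already rehearsed for $1234$.
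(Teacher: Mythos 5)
Your handling of part (1) and of the $1324$ half of part (2) is essentially the paper's argument and is fine, and your proposed ``survival'' lemma (a cell of $M_\sigma$ survives into $B_\sigma$ if and only if it has a LR-minimum strictly below-left and a RL-maximum strictly above-right of it) is exactly the right tool. The problems are in the $3416725$ step, which you rightly flag as the crux but then set up incorrectly. First, the structural data is wrong: the LR-minima of $3416725$ are the ``3'' and the ``1'', and its RL-maxima are the ``7'' and the ``5'', so the reduction deletes $3,1,7,5$ and $B_{3416725}$ has exactly \emph{three} filled cells, corresponding to $4,6,2$ (pattern $231$), not four cells corresponding to $1,6,7,2$. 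Your plan to ``push the $3,4$ down to LR-minima and pull the $5$ out to a RL-maximum'' is therefore aimed at the wrong entries, and the case analysis you propose is organized around a board that does not exist.

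Second, and more seriously, the direction ``$\sigma$ contains $3416725$ implies $B_\sigma$ contains $B_{3416725}$'' is false unconditionally, and your sketch never invokes the hypothesis that rescues it. Take $\sigma=14527836$ (that is, $1\oplus 3416725$): it contains $3416725$, but it has a single LR-minimum, so $B_\sigma$ is a bottom-left-justified (Ferrers) board and cannot contain $B_{3416725}$, whose shape has an absent bottom-left corner cell. (This does not contradict the lemma, because this $\sigma$ contains $1324$; part (2) is a joint equivalence, not a pattern-by-pattern one, and your write-up treats it as the latter.) The fact you cite --- every RL-maximum lies weakly above-right of some LR-minimum --- cannot produce the required sliding. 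The paper's proof uses $1324$-avoidance of $\sigma$ twice and essentially: replacing the ``1'' (at position $i_3$) by the rightmost LR-minimum $\sigma_j$ to its left, an occurrence of $1324$ at $(j,i_2,i_3,i_4)$ is what forces $i_2<j<i_3$; replacing the ``3'' (at position $i_1$) by the smallest LR-minimum $\sigma_k$ to its left, an occurrence of $1324$ at $(k,i_1,i_6,i_7)$ is what forces $\sigma_{i_6}<\sigma_k<\sigma_{i_1}$; only then is $(k,i_2,j,i_4,i_5,i_6,i_7)$ again an occurrence of $3416725$, and a symmetric argument handles the ``7'' and the ``5''. Any correct proof must build in this conditional use of $1324$-avoidance; without it your ``bookkeeping'' cannot close. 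For the converse direction your instinct is right but underspecified: what forces the pattern in $\sigma$ is that the two \emph{absent} corner cells of a copy of $B_{3416725}$, combined with your survival lemma applied to the adjacent present cells, yield two LR-minima and two RL-maxima of $\sigma$ in the staggered positions of the circles in the paper's figure, and these together with the three $1$'s give $3416725$.
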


%In this notation, the empty cells in the last pattern must be outside $B_\sigma$.

\begin{proof}
Clearly, if $\sigma$ contains an occurrence of $1234$ (respectively, $1324$), then $B_\sigma$ contains $B_{1234}$ (respectively, $B_{1324}$) as a sub-board. Likewise, if $B_\sigma$ contains $B_{1234}$ (respectively, $B_{1324}$) as a sub-board, then $M_\sigma$ has a LR-minimum below and to the left, and a RL-maximum above and to the right, of that occurrence of $B_{1234}$ (respectively, $B_{1324}$); in other words, $\sigma$ contains pattern $1234$ (respectively, $1324$). Thus, $\sigma$ avoids $1234$ (respectively, $1324$) if and only if $B_\sigma$ avoids $B_{1234}$ (respectively, $B_{1324}$). 

Consider a $B_{1324}$-avoiding board $B_\sigma$. It is easy to see that if $B_\sigma$ contains $B_{3416725}$, then $M_\sigma$ has at least two LR-minima in non-consecutive rows and columns and at least two RL-maxima in non-consecutive rows and columns above and to the right of those LR-minima; these four entries correspond to the circles on the border of $B_{3416725}$. Given the position of 1's in $B_{3416725}$, this implies that $M_\sigma$ contains $M_{3416725}$, i.e., $\sigma$ contains $3416725$.

Conversely, suppose that a $1324$-avoiding permutation $\sigma$ contains an occurrence $(\sigma_{i_1},\dots,\sigma_{i_7})$ of $3416725$ with $1<i_1\le\dots\le i_7\le n$, that is, $\sigma_{i_3}<\sigma_{i_6}<\sigma_{i_1}<\sigma_{i_2}<\sigma_{i_7}<\sigma_{i_4}<\sigma_{i_5}$. We will show that in this case $\sigma$ also contains an occurrence of $3416725$ where the ``3'' and the ``1'' are LR-minima of $\sigma$ and the ``7'' and the ``5'' are RL-maxima of $\sigma$.

Define positions $j$ and $k$ as follows. If $\sigma$ has a LR-minimum at position $i_3$, let $j=i_3$. If not, let $j<i_3$ be such that $\sigma_j$ is the rightmost LR-minimum to the left of $\sigma_{i_3}$. Then $\sigma_j<\sigma_{i_3}$. If $j<i_2$, then $\sigma$ contains an occurrence of $1324$ at positions $(j,i_2,i_3,i_4)$, which is impossible. Therefore, $i_2<j<i_3$ (since $\sigma_{i_2}$ is not a LR-minimum of $\sigma$). Similarly, if $\sigma$ has a LR-minimum at position $i_1$, let $k=i_1$; otherwise, let $k<i_1$ be such that $\sigma_k$ is the smallest LR-minimum to the left of $\sigma_{i_1}$. Then $\sigma_k<\sigma_{i_1}$. If $\sigma_k<\sigma_{i_6}$, then $\sigma$ contains an occurrence of 1324 at positions $(k,i_1,i_6,i_7)$, which is impossible. Therefore, $\sigma_{i_6}<\sigma_k<\sigma_{i_1}$  (since $\sigma_{i_6}$ is not a LR-minimum of $\sigma$).

Therefore, $\sigma$ contains an occurrence of $3416725$ at positions $(k,i_2,j,i_4,i_5,i_6,i_7)$, where the ``3'' and the ``1'' (that is, $\sigma_k$ and $\sigma_j$) are LR-minima of $\sigma$. Similarly, we can find an occurrence of 3416725 in $\sigma$ where $\sigma_{i_5}$ and $\sigma_{i_7}$ (the ``7'' and the ``5'') are also (replaced with) RL-maxima of $\sigma$. But if $\sigma$ contains occurrence of $3416725$ with LR-minima as the ``3'' and the ``1'' and RL-maxima as the ``7'' and the ``5'', then $B_\sigma$ contains $B_{3416725}$.

Thus, $\sigma$ avoids both $1324$ and $3416725$ if and only if $B_\sigma$ avoids both $B_{1324}$ and $B_{3416725}$.
\end{proof}

Finally, we note that Lemma 2 of B\'{o}na \cite{Bona} is equivalent to the following statement.

\begin{lemma} \label{lem:1234}
Every nice board $B$ has a unique $B_{1234}$-avoiding (0,1)-filling with a single 1 in each row and each column.
\end{lemma}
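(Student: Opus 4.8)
The plan is to reduce the statement to an elementary fact about permutations, using the convexity property of nice boards recorded above: if a nice board $B$ contains cells $(a,b)$ and $(c,d)$, then it contains every cell $(k,\ell)$ with $a\le k\le c$ and $b\le\ell\le d$. (As the excerpt notes, this lemma is the content of B\'ona's Lemma 2 transported to boards, so ultimately no new combinatorial input is needed, but I will spell the argument out.)

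First I would fix coordinates. A nice board $B$ has the same number $r$ of rows and of columns, so a $(0,1)$-filling of $B$ with a single $1$ in each row and each column is exactly a permutation matrix supported on $B$; I record it by the permutation $\pi\in S_r$ that has a $1$ in cell $(i,\pi(i))$, so that $(i,\pi(i))\in B$ for every $i$. Such fillings exist because every nice board contains the antidiagonal, which is the filling with $\pi(i)=r+1-i$.

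The key step is the claim: a permutation-matrix filling of $B$ contains $B_{1234}$ if and only if $\pi$ has an ascent, i.e. there are indices $i<j$ with $\pi(i)<\pi(j)$. For the forward implication, an occurrence of $B_{1234}$ is a choice of columns $c_1<c_2$ and rows $r_1<r_2$ spanning a full $2\times2$ sub-board whose filling has $1$'s in $(c_1,r_1)$ and $(c_2,r_2)$; reading off $\pi$ gives $\pi(c_1)=r_1<r_2=\pi(c_2)$, an ascent. Conversely, given an ascent $i<j$ with $\pi(i)<\pi(j)$, I would delete every column except $i$ and $j$ and every row except $\pi(i)$ and $\pi(j)$: by convexity all four cells $(i,\pi(i)),(i,\pi(j)),(j,\pi(i)),(j,\pi(j))$ lie in $B$, so the sub-board is the full $2\times2$ square, and since the filling is a permutation matrix its anti-corner cells $(i,\pi(j))$ and $(j,\pi(i))$ carry $0$'s while the two corner cells carry $1$'s — which is exactly $B_{1234}$.

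Granting the claim, the lemma follows at once: a filling avoids $B_{1234}$ precisely when $\pi$ has no ascent, which forces $\pi(1)>\pi(2)>\dots>\pi(r)$, i.e. $\pi(i)=r+1-i$; equivalently, the $1$'s sit exactly on the antidiagonal of $B$. That filling exists and no other avoids $B_{1234}$, so it is the unique one. I do not anticipate a real obstacle; the only point needing care is the converse direction — checking that the deleted sub-board is literally $B_{1234}$, namely that all four of its cells are present (convexity) and that its two anti-corner cells are forced to be empty (the single-$1$-per-row-and-column condition), rather than just some $2\times2$ square carrying two $1$'s.
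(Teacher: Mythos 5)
Your proof is correct and takes essentially the same approach as the paper: the paper's proof is a one-line assertion that the $1$'s must occupy exactly the antidiagonal cells, ``otherwise $B_{1234}$ is created,'' and your ascent/convexity argument (plus the explicit existence check for the antidiagonal filling) is just a careful spelling-out of that same observation.
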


\begin{proof}
Clearly, the 1's have to be placed exactly in all the cells on the antidiagonal of $B$, otherwise $B_{1234}$ is created.
\end{proof}

This means that to prove Theorem \ref{thm:main} we only need to prove the following lemma.

\begin{lemma} \label{lem:1324-3416725}
Every nice board $B$ has a unique $\left(B_{1324},B_{3416725}\right)$-avoiding (0,1)-filling with a single 1 in each row and each column.
\end{lemma}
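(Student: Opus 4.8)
The plan is to recast $(0,1)$-fillings of $B$ with one $1$ in each row and column as non-attacking rook placements, and then induct on $r$, the common number of rows and columns of $B$.

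First I would fix notation. Such a filling is the same as a permutation $w\in S_r$ with $(c,w(c))\in B$ for every column $c$. Write column $c$ of $B$ as the interval of rows $[\operatorname{bot}(c),\operatorname{top}(c)]$ it occupies; the rectangle-convexity property of $B$ together with the fact that $B$ contains the antidiagonal forces $\operatorname{bot}$ and $\operatorname{top}$ to be weakly decreasing in $c$, with $\operatorname{bot}(c)\le r-c+1\le\operatorname{top}(c)$, and conversely every such pair of staircases gives a nice board. In this language, $w$ contains $B_{1324}$ precisely when there are columns $c_1<c_2$ with $w(c_1)>w(c_2)$, $\operatorname{bot}(c_1)\le w(c_2)$, and $w(c_1)\le\operatorname{top}(c_2)$ (the last two conditions saying the lower-left and upper-right corners of the spanned $2\times2$ square lie in $B$); and $w$ contains $B_{3416725}$ precisely when there are columns $\gamma_1<\gamma_2<\gamma_3$ with $w(\gamma_3)<w(\gamma_1)<w(\gamma_2)$, $\operatorname{bot}(\gamma_1)>w(\gamma_3)$, $\operatorname{top}(\gamma_3)<w(\gamma_2)$, $\operatorname{bot}(\gamma_2)\le w(\gamma_3)$, and $\operatorname{top}(\gamma_3)\ge w(\gamma_1)$ --- the first two inequalities forcing the lower-left and upper-right corners of the spanned $3\times3$ subgrid out of $B$, the last two putting two of its remaining cells into $B$, and the other required incidences following from convexity once the three rooks are known to lie in $B$.

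The core of the induction is to single out, from the shape of $B$ near a corner (say, the top row together with the rightmost column), a rook whose position $p(B)$ is forced in every $(B_{1324},B_{3416725})$-avoiding filling and whose deletion --- along with its row and column --- leaves a nice board $B'$. I would split into cases by how degenerate $B$ is near that corner: when a relevant row or column is thin, the rook it must carry is forced at once. The substantial case is the generic one, where $B_{1324}$-avoidance alone still permits two positions for the corner rook --- this is exactly the slack B\'ona used to show $|\Av_n(1324)|>|\Av_n(1234)|$ for $n\ge 7$ --- and the point is that the "wrong" choice creates three columns in the configuration above, i.e.\ a forced occurrence of $B_{3416725}$. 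Proving this requires tracking, via the weak monotonicity of $\operatorname{bot}$ and $\operatorname{top}$, exactly which boundary cells of the relevant $3\times3$ subgrid do and do not lie in $B$; this bookkeeping, together with the verification that $B'$ is again nice (which is precisely what the choice of corner and of $p(B)$ is engineered to ensure), is where I expect the real work to be.

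Granting that, the rest is routine. By the inductive hypothesis $B'$ has a unique $(B_{1324},B_{3416725})$-avoiding filling $w'$; one checks that re-inserting the corner rook at $p(B)$ gives a valid filling of $B$ creating no new occurrence of $B_{1324}$ or $B_{3416725}$ --- using the position $p(B)$ and whatever auxiliary avoidance the induction lets $w'$ inherit --- which gives existence, while conversely any $(B_{1324},B_{3416725})$-avoiding filling of $B$ has its corner rook at $p(B)$ (by the forcedness step) and restricts to a $(B_{1324},B_{3416725})$-avoiding filling of $B'$, hence agrees with $w'$ off the corner, which gives uniqueness. The base case $r=0$ (equivalently $r=1$) is trivial.
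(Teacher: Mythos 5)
Your setup (fillings as rook placements on a staircase-convex board, the translation of $B_{1324}$- and $B_{3416725}$-containment into inequalities on $\operatorname{bot}$, $\operatorname{top}$, and $w$) is correct, and your overall plan --- force a corner rook, delete its row and column, induct --- is the same shape as the paper's recursion. But the proposal stops exactly where the lemma's content begins: you never say what the forced position $p(B)$ is, and you never prove it is forced, explicitly deferring this as ``where I expect the real work to be.'' The paper's rule is concrete: compare the length $k$ of the bottom row with the length $l$ of the rightmost column, and place the $1$ in the leftmost cell of the bottom row if $k\le l$, in the topmost cell of the rightmost column if $k>l$ (this comparison is also what keeps the residual board nice, which you flag but do not verify). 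Without this rule and its justification, the induction is an empty frame.

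Moreover, the one concrete guess you make about how forcedness would be established is not how it actually goes. You claim that $B_{1324}$-avoidance leaves only two candidate positions for the corner rook and that the ``wrong'' one immediately yields an occurrence of $B_{3416725}$ among three columns. In general $B_{1324}$-avoidance alone leaves many admissible positions in the bottom row (this surplus is precisely B\'ona's point in proving $|\Av_n(1324)|>|\Av_n(1234)|$), and a misplaced corner rook does not by itself create either forbidden pattern. The paper's uniqueness argument is a chain: if the bottom-row rook sits in the $m$th column from the right with $m<k$, then $B_{1324}$-avoidance forces the rook in the $(m{+}1)$st column from the right to sit above the top of the $m$th column (a $B_{14523}$ configuration); this auxiliary high rook then forces every rook in the bottom $l$ rows into the $m$ rightmost columns, since a rook further left in one of those rows creates $B_{1324}$ (if within the bottom row's columns) or $B_{3416725}$ (if beyond them, using the high rook); the contradiction is then a pigeonhole count, $l$ rows needing distinct columns among only $m<k\le l$ columns --- not a single forced pattern occurrence. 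The existence half (that the greedy placement can never participate in either pattern, hence by induction the whole filling avoids both) is also asserted rather than checked, though that part is easy once $p(B)$ is specified. As it stands, the key combinatorial argument of the lemma is missing.
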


\begin{proof}
Note that only the cells with 1's need to be specified, since the rest of the cells are filled with 0's.

We will insert the 1's into cells of $B$ recursively as follows. Suppose that the bottom row of $B$ has $k$ cells, and the rightmost column of $B$ has $l$ cells. Then put a 1 in the leftmost cell in the bottom row if $k\le l$ and into the top cell in the rightmost column if $k>l$. Remove the row and column containing this 1 and continue according to the same rule until no cells remain.

Proceeding inductively, we only need to prove that the position for the first 1 inserted into $B$ is unique under the above avoidance conditions. Indeed, it is easy to see that a 1 inserted as described above cannot be part of any occurrence of $B_{1324}$ or $B_{3416725}$. Thus, it is easy to see by induction that the whole filling of $B$ as above avoids both $B_{1324}$ and $B_{3416725}$.

Suppose that a 1 is inserted in a different position in the bottom row (if $k\le l$) or the rightmost column (if $k<l$). Without loss of generality, we can assume that $k\le l$ (otherwise, reflect $B$ across the antidiagonal), so that the 1 is inserted in the bottom row. Then this 1 is not in the leftmost column intersecting the bottom row of $B$, but it is in the bottom row in the $m$th column from the right, where $m<k$. Suppose the $m$th rightmost column is of height $h$. Then $h\ge l$. Consider the $(m+1)$-st column of $B$ from the right. It contains the bottom row cell immediately to the left of the 1 in the bottom row. Therefore, the 1 in the $(m+1)$-st column from the right cannot be in the bottom $h$ cells of that column (or else $B$ would contain $B_{1324}$). Thus, the $(m+1)$-st column from the right should contain more than $h$ cells, and the 1 in that column should be in row $h_1$ from the bottom for some $h_1>h$. Therefore, $B$ contains the sub-board 
%${\tiny \begin{bmatrix}1&\phantom{0}\\0&1\end{bmatrix}}=B_{14523}$
\[
\begin{tikzpicture}[scale=0.5]

\foreach \x in {1,2}
{
\draw (\x,0) -- (\x,1);
\draw (0,\x) -- (1,\x);
}

\draw[very thick] 
     (0,0)
-- ++(0,2)
-- ++(1,0)
-- ++(0,-1)
-- ++(1,0)
-- ++(0,-1)
-- cycle;

\draw (0.5,0.5)
+(0,1) node {$\bullet$} %{1}
%+(0,2) node {0}
%+(1,0) node {0}
%+(1,1) node {0}
+(1,0) node {$\bullet$} %{1}
%+(2,1) node {0}
;

\draw[color=gray] (0,0)
+(0,0) node {$\circ$} %{1}
+(1,2) node {$\circ$} %{1}
+(2,1) node {$\circ$} %{1}
;

\draw (0,1) node[left] {$B_{14523}=$};

\end{tikzpicture}
\]
in the columns $m$ and $(m+1)$ from the right and rows 1 and $h_1$ from the bottom.

Now consider the $l-1$ rows immediately above the bottom row of $B$, i.e., those that also intersect with the rightmost column. Let $l_1\in[2,l]$, and suppose the 1 in row $l_1$ from the bottom is in a column $m_1$ from the right. If $m_1\in[m+1,k]$, then $B$ contains the pattern $B_{1324}$ in the intersections of rows 1 and $l_1$ from the bottom and columns $m$ and $m_1$ from the right. If $m_1>k$, then $B$ contains the pattern $B_{3416725}$ in the intersections of rows 1, $l_1$ and $h_1$ from the bottom and columns $m$, $m+1$ and $m_1$ from the right. Therefore, to avoid both $B_{1324}$ and $B_{3416725}$, we must have $m_1\le m$.

Thus, in every one of the $l$ bottom rows, the 1's are contained in the $m$ rightmost columns. But this is a contradiction, since $m<k\le l$. This ends the proof.
\end{proof}

Such a $(B_{1324},B_{3416725})$-avoiding filling of a nice board $B$ is similar to the constructions of Simion and Schmidt~\cite{SS} and Krattenthaler~\cite{Kra} of 132-avoiding permutations starting from their LR-minima.

Finally, note that all the permutations in a class in the sense of B\'ona~\cite{Bona} are exactly those that have the same nice board (but different fillings). As a result, the single filling that avoids $(B_{1324}, B_{3416725})$ corresponds to exactly one permutation in $\Av(1324,3416725)$ corresponding to the considered board. The same applies to the single filling that avoids 1234. Thus, the sets $\Av_n(1324,3416725)$ and $\Av_n(1234)$ have the same cardinality for all $n\ge 0$. This ends the proof of Theorem~\ref{thm:main}.

\bigskip

A permutation $\sigma$ is called an \emph{involution} if $\sigma^{-1}=\sigma$. Let $I_n(\pi)$ denote the subset of involutions in $\Av_n(\pi)$.

\begin{theorem}
The set of patterns $(4231,5276143)$ is Wilf-equivalent to pattern $4321$ on involutions. Moreover, for all $n\ge 0$, we have $|I_n(4231,5276143)|=|I_n(4321)|=M_n$, the $n$th Motzkin number.
\end{theorem}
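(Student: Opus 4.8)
The plan is to prove separately the two equalities $|I_n(4321)| = M_n$ and $|I_n(4231,5276143)| = |I_n(4321)|$; together they give the theorem.

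For the first equality I would use the Robinson--Schensted correspondence. It restricts to a bijection between the involutions in $S_n$ and the standard Young tableaux with $n$ cells, and under it the length of a longest decreasing subsequence of an involution equals the number of rows of the associated tableau. Hence $I_n(4321)$, the set of involutions with no decreasing subsequence of length $4$, corresponds bijectively to the set of standard Young tableaux with $n$ cells and at most three rows, which is classically enumerated by the Motzkin number $M_n$. So $|I_n(4321)| = M_n$.

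For the second equality the idea is to replay the board argument of Section~\ref{sec:4-47} on the reverses of the patterns involved: since $4231 = 1324^r$, $5276143 = 3416725^r$, and $4321 = 1234^r$, reflecting Theorem~\ref{thm:main} and Lemmas~\ref{lem:mb}--\ref{lem:1324-3416725} left--right yields a ``co-board'' $\widetilde{B}_\sigma$ of each permutation $\sigma$ --- obtained from $M_\sigma$ by deleting the rows and columns of the LR-maxima and RL-minima together with every cell not enclosed by them, so that $\widetilde{B}_\sigma$ is a skew board containing the main diagonal --- together with the equivalences ``$\sigma$ avoids $4321$ iff $\widetilde{B}_\sigma$ avoids $\widetilde{B}_{4321}$'' and ``$\sigma$ avoids $\{4231,5276143\}$ iff $\widetilde{B}_\sigma$ avoids $\{\widetilde{B}_{4231},\widetilde{B}_{5276143}\}$'', the uniqueness of a $\widetilde{B}_{4321}$-avoiding filling and of a $\{\widetilde{B}_{4231},\widetilde{B}_{5276143}\}$-avoiding filling with a single $1$ in each row and column of any co-board, and hence a bijection $\psi\colon \Av_n(4321)\to\Av_n(4231,5276143)$ that keeps the co-board of $\tau$ and the placement of its LR-maxima and RL-minima fixed but replaces the filling of the co-board by the unique $\{\widetilde{B}_{4231},\widetilde{B}_{5276143}\}$-avoiding one.

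It then suffices to show that $\psi$ commutes with the map $\sigma\mapsto\sigma^{-1}$: since $\Av_n(4321)$ and $\Av_n(4231,5276143)$ are both closed under inversion (all three patterns being involutions), $\psi$ will restrict to a bijection $I_n(4321)\to I_n(4231,5276143)$, finishing the proof. I would derive this from two facts. The first is routine: for any $\tau$ one has $M_{\tau^{-1}} = (M_\tau)^{T}$, and the co-board construction commutes with transposition (it exchanges LR-maxima with RL-minima and carries the enclosed region to its transpose), so $\widetilde{B}_{\tau^{-1}} = (\widetilde{B}_\tau)^{T}$ and the placement of the LR-maxima and RL-minima of $\tau^{-1}$ is the transpose of that of $\tau$. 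The second is the crux, and is exactly why the \emph{reversed} patterns work where those of Section~\ref{sec:4-47} do not: each of $4231$, $5276143$, and $4321$ is an involution, so each of $\widetilde{B}_{4231}$, $\widetilde{B}_{5276143}$, $\widetilde{B}_{4321}$ is invariant under transposition \emph{as a filled board} (because $(\widetilde{B}_\alpha)^{T} = \widetilde{B}_{\alpha^{-1}}$); hence a one-per-row-and-column filling of a co-board avoids $\{\widetilde{B}_{4231},\widetilde{B}_{5276143}\}$ (respectively $\widetilde{B}_{4321}$) if and only if its transpose does, and by the uniqueness in (the reflections of) Lemmas~\ref{lem:1234} and~\ref{lem:1324-3416725} this forces the canonical filling of $(\widetilde{B}_\tau)^{T}$ to be the transpose of the canonical filling of $\widetilde{B}_\tau$. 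Unwinding the definition of $\psi$ then gives $\psi(\tau^{-1}) = \psi(\tau)^{-1}$. The step I expect to be the main obstacle is precisely this transpose-equivariance of the canonical filling: the greedy filling rule behind (the reflection of) Lemma~\ref{lem:1324-3416725} is phrased asymmetrically, in terms of bottom rows and rightmost columns, so its output is not visibly transpose-symmetric --- the resolution is to argue only via uniqueness and never via the rule itself. A lesser but genuine nuisance is bookkeeping the ambient $n\times n$ placement of the LR-maxima and RL-minima, data that the bare co-board $\widetilde{B}_\sigma$ discards, so that $\psi$ is a well-defined bijection on $S_n$ and the identity $\psi(\tau^{-1})=\psi(\tau)^{-1}$ holds literally and not merely at the level of abstract filled boards. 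Combining the two equalities yields $|I_n(4231,5276143)| = |I_n(4321)| = M_n$.
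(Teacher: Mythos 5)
Your proposal is correct and follows essentially the same route as the paper: reverse the Section~\ref{sec:4-47} construction so the boards contain the main diagonal, and use the uniqueness of the canonical fillings (Lemmas~\ref{lem:1234} and~\ref{lem:1324-3416725}) together with the fact that the reversed patterns $4321$, $4231$, $5276143$ are involutions to force transpose-symmetric fillings on transpose-symmetric boards, hence a bijection respecting inversion. The only difference is cosmetic: you sketch $|I_n(4321)|=M_n$ via Robinson--Schensted and three-row standard Young tableaux, whereas the paper simply cites Regev.
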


\begin{proof}
Note that if a board $B$ as in Lemma \ref{lem:1324-3416725} is symmetric about the antidiagonal then the $\left(B_{1324},B_{3416725}\right)$-avoiding (0,1)-filling of $B$ is also symmetric about the antidiagonal (since it is unique, and reflection across the antidiagonal preserves $B$).

Reversing $B$, we obtain a board $B'$ with equal numbers of rows and columns that contains the main diagonal. Thus, by Lemmas \ref{lem:1234} and \ref{lem:1324-3416725}, each such $B'$ contains a unique $B_{4321}$-avoiding filling and a unique $\left(B_{4231},B_{5276143}\right)$-avoiding filling. Moreover, if $B'$ is symmetric about the main diagonal, then both of those filling are symmetric about the main diagonal as well. Note that if $\sigma$ is an involution, then $B_{\sigma^{-1}}=B_{\sigma}$ is symmetric about the main diagonal. Thus, $|I_n(4231,5276143)|=|I_n(4321)|$, and it is known \cite{Regev} that $|I_n(4321)|=M_n$. This ends the proof.
\end{proof}

\section{A $(4,4)\sim(4,4,6)$ Wilf-Equivalence} \label{sec:44-446}

We start with a few preliminaries that will be needed in this section. A \emph{permutation class} is a set of permutations which is closed downward under the pattern containment order, i.e., $\CC$ is a class if whenever $\pi \in \CC$ and $\sigma \leq \pi$, we have $\sigma \in \CC$. Every permutation class can be described by the unique set of minimal permutations which it does not contain, called its \emph{basis}. The class of permutations avoiding a set of patterns $\Pi$ is then denoted $\Av(\Pi)$. 

A permutation $\pi$ of length $n$ is \emph{sum decomposable} if there exists $1 \leq i < n$ such that $\{\pi(j) : j \leq i\} = \{1,2,\ldots, i\}$. Otherwise, $\pi$ is said to be \emph{sum indecomposable}. Similarly, $\pi$ is said to be \emph{skew decomposable} if there exists $1 \leq i < n$ such that $\{\pi(j) : j > i\} = \{1, 2, \ldots, n-i\}$, and is otherwise \emph{skew indecomposable}. 

An \emph{interval} of a permutation is a nonempty contiguous set of indices $\{i, i+1, \ldots, j\}$ such the set of values $\{\pi(i), \pi(i+1), \ldots, \pi(j)\}$ is also contiguous. A permutation of length $n$ is said to be \emph{simple} if it does not contain any intervals other than those of lengths $1$ and $n$. For instance, the permutations $2413$ and $3142$ are the only simple permutations of length four. Given a class $\CC$, we use $\Si(\CC)$ to denote the set of simple permutations in the class $\CC$. 

Simple permutations represent the fundamental building blocks of permutations via the operation of inflation. Given a permutation $\sigma$ of length $k$ and a sequence of nonempty permutations $(\tau_i)_{i=1}^k$, the \emph{inflation} of $\sigma$ by $(\tau_i)_{i=1}^k$, written as $\sigma[\tau_1, \tau_2, \ldots, \tau_k]$, is the permutation of length $|\tau_1| + \cdots + |\tau_k|$ such that each entry $\sigma(i)$ is replaced by the interval $\tau_i$. For example,
	\[3142[123,1,21,312] = 567198423.\]
	
Note that the sum decomposable (resp., skew decomposable) permutations are exactly those which are inflations of $12$ (resp., $21$). Moreover, we write $12[\sigma,\tau]$ as $\sigma \oplus \tau$ and $21[\sigma,\tau]$ as $\sigma \ominus \tau$.
	
The following lemma allows us to derive information about a class of permutations by looking at the simple permutations in the class.
\begin{prop}[Albert and Atkinson~\cite{AA}]\label{lemma:decomposition}
Given a permutation $\pi$, there exists a unique simple permutation $\sigma$ such that $\pi=\sigma[\tau_1,\ldots,\tau_k]$.  When $\sigma \not\in \{12,21\}$, the intervals $\tau_1,\ldots,\tau_k$ are uniquely determined.  When $\sigma=12$ (resp., $\sigma=21$), the intervals are unique if we require the first of the two intervals to be sum (resp., skew) indecomposable.
\end{prop}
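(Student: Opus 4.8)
The plan is to reduce everything to the combinatorics of intervals, with a single technical lemma carrying the weight. The first tool is an \emph{overlap lemma}: if $A$ and $B$ are intervals of $\pi$ with $A\cap B\ne\emptyset$ and neither of $A,B$ containing the other, then $A\cup B$, $A\cap B$, $A\setminus B$, and $B\setminus A$ are all intervals of $\pi$. This is immediate from the definition, since contiguity of the index sets is clear and the value sets of $A$ and $B$ are overlapping contiguous blocks of integers. From it I would deduce the \emph{block-structure lemma}: if $\rho$ is simple and $\pi=\rho[\upsilon_1,\ldots,\upsilon_m]$ with $B_1,\ldots,B_m$ the index sets of the inflated intervals, then every interval $I$ of $\pi$ is either contained in a single $B_i$ (in which case it is an interval of $\upsilon_i$) or equals a union $B_i\cup B_{i+1}\cup\cdots\cup B_j$ whose index set $\{i,\ldots,j\}$ is an interval of $\rho$; since $\rho$ is simple, the second alternative forces $I$ to be a single block or all of $\pi$. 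Two consequences I would record for later: when $|\rho|\ge 4$ the blocks $B_1,\ldots,B_m$ are precisely the maximal intervals of $\pi$ other than $\pi$ itself; and no inflation of a simple permutation of length at least $3$ is sum or skew decomposable, since a prefix of $\pi$ witnessing sum (resp.\ skew) decomposability would, by the lemma, lie inside $B_1$ or be a union of initial blocks, and in either case $\rho$ itself would be sum (resp.\ skew) decomposable.

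For \emph{existence}, assume $|\pi|\ge 2$ (the case $|\pi|=1$ being the trivial $\pi=1$). If $\pi$ is sum decomposable, let $i$ be least with $\{\pi(1),\ldots,\pi(i)\}=\{1,\ldots,i\}$; then the pattern of the first $i$ entries is sum indecomposable by minimality, so $\sigma=12$ with $\tau_1,\tau_2$ the two patterns works, and symmetrically with $\sigma=21$ if $\pi$ is skew decomposable. Otherwise $\pi$ is neither, and I would let $\mathcal M$ be the set of maximal intervals of $\pi$ other than $\pi$ itself. By the overlap lemma the members of $\mathcal M$ are pairwise disjoint: if $A,B\in\mathcal M$ overlapped, then $A\cup B$ would be a strictly larger interval, hence $A\cup B=\pi$ by maximality, and then $A$ and $B$ would have overlapping contiguous value sets covering $[n]$, so $\pi$ would be sum or skew decomposable, a contradiction. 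Thus $\mathcal M$ partitions $[n]$ into blocks $B_1,\ldots,B_k$; taking $\sigma$ to be the quotient permutation on these blocks and $\tau_i=\pi|_{B_i}$ gives $\pi=\sigma[\tau_1,\ldots,\tau_k]$, and $\sigma$ is simple because a nontrivial proper interval of $\sigma$ would lift to an interval of $\pi$ strictly containing some $B_i$ yet not all of $\pi$. Finally $k\ne 2$ (else $\sigma\in\{12,21\}$ and $\pi$ would be sum or skew decomposable), so $|\sigma|\ge 4$.

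For \emph{uniqueness}, suppose $\pi=\sigma[\tau_1,\ldots,\tau_k]=\rho[\upsilon_1,\ldots,\upsilon_m]$ with $\sigma,\rho$ simple, where for $|\pi|\ge 2$ we insist $|\sigma|,|\rho|\ge 2$ so that the trivial inflation $\pi=1[\pi]$ is not counted (otherwise $\sigma$ is not unique when $\pi$ is itself simple). By the second consequence of the block-structure lemma, which of the three possibilities — $\pi$ sum decomposable, $\pi$ skew decomposable, or neither — occurs is a property of $\pi$ alone and corresponds to $\sigma=12$, $\sigma=21$, or $|\sigma|\ge 4$ respectively; moreover a permutation of length at least $2$ cannot be both sum and skew decomposable, so $\sigma$ and $\rho$ land in the same case. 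If $\pi$ is sum decomposable then $\sigma=\rho=12$, and the requirement that $\tau_1,\upsilon_1$ be sum indecomposable forces each to be the pattern of the shortest prefix of $\pi$ whose value set is an initial segment, whence $\tau_1=\upsilon_1$ and then $\tau_2=\upsilon_2$; the skew-decomposable case is symmetric. If $\pi$ is neither, then $|\sigma|,|\rho|\ge 4$, so by the first consequence the blocks of both decompositions equal the maximal proper intervals of $\pi$; hence $k=m$, $\tau_i=\upsilon_i$, and $\sigma=\rho$ is the common quotient.

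The main obstacle is the block-structure lemma: the assertion that an interval meeting two or more blocks must in fact be a union of whole blocks is intuitively clear but requires a careful comparison of the value ranges of the blocks against the contiguity of the interval's value set, using the simplicity of $\rho$ (which forbids two consecutive blocks from having adjacent value ranges) to rule out the configurations in which the interval would cut through a block partway. Once that lemma is in hand, the overlap lemma, the three-way case split for existence, and the unwinding for uniqueness are all routine.
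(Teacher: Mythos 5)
The paper does not actually prove this proposition; it is quoted from Albert and Atkinson \cite{AA}. Your outline is essentially the standard argument for the substitution decomposition: the interval overlap lemma, the identification of the blocks of an inflation of a long simple permutation with the maximal proper intervals, the disjointness of maximal proper intervals when $\pi$ is sum and skew indecomposable, and the canonical $12$/$21$ cases handled via the shortest sum- (skew-) closed prefix. The overall architecture is sound, and you are right to flag (and repair) the small imprecision in the statement itself concerning the trivial inflation $\pi=1[\pi]$.

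Two caveats. First, your block-structure lemma is false as stated for $\rho\in\{12,21\}$: for $\pi=123=12[12,1]$ the interval $\{2,3\}$ is neither contained in a single block nor a union of whole blocks. Since every application you make is to $|\rho|\ge 4$ (equivalently, length at least $3$), the fix is simply to restrict the lemma to that case, where it is true. Second, the mechanism you hint at for proving it --- that simplicity ``forbids two consecutive blocks from having adjacent value ranges'' --- is not enough on its own. The argument that works is: if an interval $I$ meets at least two blocks, then the set $J$ of blocks it meets is contiguous in position (blocks are consecutive position ranges) and contiguous in value (block value ranges are disjoint contiguous ranges and $I$'s value set is contiguous), so $J$ is an interval of $\rho$; simplicity then forces $J$ to be all of $\{1,\ldots,m\}$, and $I$ must contain every block that is interior in position and every block that is interior in value, while simplicity also guarantees $\rho(1),\rho(m)\notin\{1,m\}$, so no block is extreme in both senses and hence $I=\pi$. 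With the lemma so stated and proved, your existence and uniqueness steps go through (one micro-gap: passing from $k\ge 3$ to $|\sigma|\ge 4$ uses the easy fact that there are no simple permutations of length $3$).
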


We can now prove the main theorem of this section. We do this by finding a structural description of the class, then using this description to set up a functional equation satisfied by the generating function for the class.

\begin{theorem} \label{thm:main2}
$\{2143,3142,246135\}\sim \{2413,3142\}$.
\end{theorem}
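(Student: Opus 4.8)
The plan is to prove this Wilf-equivalence by finding explicit structural descriptions of both classes $\mathcal{C}_1 = \Av(2143,3142,246135)$ and $\mathcal{C}_2 = \Av(2413,3142)$ via their simple permutations, and then showing the two structures yield the same generating function. The class $\Av(2413,3142)$ is well understood: a permutation avoids both $2413$ and $3142$ if and only if it is \emph{separable}, i.e., it can be built from singletons using only the operations $\oplus$ and $\ominus$ (equivalently, it contains no simple permutation of length $\geq 4$). So the right-hand side has a clean recursive description, and its generating function $S(x)$ satisfies a known algebraic equation. The work is on the left-hand side: I first need to determine $\Si(\mathcal{C}_1)$, the simple permutations in $\Av(2143,3142,246135)$.

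The key step is to classify the simple permutations avoiding $2143$, $3142$, and $246135$. Since $2143$ and $3142$ are both avoided, and these are closely related to separability, I expect the simple permutations of $\mathcal{C}_1$ to form a small, explicitly describable family — likely the increasing oscillations or some similar one-parameter family of simple permutations, possibly together with finitely many sporadic ones, with $246135$ being precisely the obstruction that trims an otherwise infinite antichain down to a controlled set. Concretely, I would: (i) enumerate small simple permutations avoiding $2143$ and $3142$ by hand or by noting structural constraints (a simple permutation avoiding $2143$ and $3142$ must have a rigid ``staircase''-like shape); (ii) determine which of these also avoid $246135$; and (iii) conjecture and verify the general pattern. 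Once $\Si(\mathcal{C}_1)$ is pinned down, Proposition~\ref{lemma:decomposition} (Albert--Atkinson) lets me write every $\pi \in \mathcal{C}_1$ uniquely as $\sigma[\tau_1,\ldots,\tau_k]$ with $\sigma$ simple and each $\tau_i$ constrained by which patterns $\pi$ must avoid; tracking exactly which $\tau_i$ can be sum/skew decomposable, and which patterns they in turn must avoid, produces a system of functional equations for the generating function $C_1(x)$ of $\mathcal{C}_1$.

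The final step is to solve (or at least manipulate) this functional equation and check that $C_1(x) = C_2(x)$, where $C_2(x)$ is the separable generating function. I would expect to reduce the system to a single polynomial equation in $C_1(x)$ and $x$, then verify it coincides with the known minimal polynomial of the separable generating function $\sum_n |\Av_n(2413,3142)| x^n = \frac{1 - x - \sqrt{1-6x+x^2}}{2}$ (shifted appropriately for the empty permutation). If the two algebraic functions agree — which can be checked by confirming they satisfy the same irreducible polynomial and share enough initial coefficients — the theorem follows, since equal generating functions give $|\Av_n(2143,3142,246135)| = |\Av_n(2413,3142)|$ for all $n$.

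The main obstacle I anticipate is step (i)--(iii): correctly identifying the full set of simple permutations in $\Av(2143,3142,246135)$ and, crucially, verifying that $246135$ (and not some other excluded configuration) is exactly what's needed so that the resulting structure is neither too large nor too small. Setting up the functional equation requires a careful case analysis of the inflations — in particular, for $\sigma = 12$ and $\sigma = 21$ one must impose the indecomposability condition from Proposition~\ref{lemma:decomposition} and separately track which sub-patterns each interval avoids, since $2143 = 21[1,21] $-type decompositions interact subtly with sum/skew structure. Getting the bookkeeping right so that the equation is both correct and solvable is where the real care is needed; by contrast, once the equation is in hand, matching it against the separable generating function should be routine.
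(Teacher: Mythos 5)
Your overall route --- classify the simple permutations of $\Av(2143,3142,246135)$, apply the Albert--Atkinson substitution decomposition, derive a functional equation, and match it against the Schr\"oder generating function --- is exactly the paper's strategy, but both places where the actual mathematical content lives are left as open guesses, and the guesses point in the wrong direction. First, the simple permutations of this class are not ``increasing oscillations or some similar one-parameter family, possibly with finitely many sporadic ones.'' The paper proves that $\Si(\Av(2143,3142,246135)) = \Si(\Av(2143,3142,3412))$: the simples are precisely the simple \emph{skew-merged} permutations avoiding $3142$ (equivalently, containing $2413$), an infinite family previously enumerated by Albert and Vatter. Establishing this identity is a genuine lemma --- one shows that a simple permutation in the class containing $3412$, with the occurrence chosen extremally, is forced to have a nontrivial interval --- and ``enumerate small cases, conjecture, and verify the general pattern'' does not supply that argument. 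The role of $246135$ is to cut the simples down to the skew-merged ones, not to trim an infinite antichain to a controlled finite-ish set.

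Second, the inflation bookkeeping is not routine in the way you anticipate, and the difficulty is not where you place it. For a simple skew-merged $\sigma$ in the class, the entries on the increasing subsequence can only be inflated by \emph{increasing} permutations (a descent there creates a $2143$), while the entries on the decreasing subsequence and the central point may be inflated by arbitrary class members. Consequently one needs the simples counted by a bivariate generating function $s(u,v)$ that distinguishes the two kinds of entries --- a refinement of the Albert--Vatter computation (in the spirit of B\'ona--Homberger--Pantone--Vatter) --- into which one substitutes $u = x/(1-x)$ and $v = f$. By contrast, the $12$ and $21$ cases you flag as the subtle ones are easy here: the sum decomposables are exactly the permutations $1\oplus\pi$ and $\pi\oplus 1$, giving $f_\oplus = 2xf - x^2(f+1)$, and the class is closed under skew sums, giving $f_\ominus = f^2/(1+f)$. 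Without the classification of the simples and the bivariate refinement, the functional equation you hope to ``reduce and solve'' cannot even be written down, so as it stands the proposal is a correct outline of the paper's method with the two essential steps missing. (A final small point: after exhibiting the Schr\"oder series as a solution of the functional equation, you should also note why it is the unique formal power series solution, e.g., because the constant term is forced to be $0$ and each coefficient is then determined by the earlier ones.)
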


Define $\CC = \Av(2143,3142,246135)$. It is well-known that the set of permutations which avoid the patterns $\{2413,3142\}$, known as the \emph{separable permutations}, are counted by the large Schr\"oder numbers. Accordingly, we prove Theorem~\ref{thm:main2} by showing directly that $\CC$ is also counted by the Schr\"oder numbers, as has recently been conjectured by Egge~\cite{Egge}.

The \emph{skew-merged permutations} are the class of permutations which are the union of an increasing sequence and a decreasing sequence. Stankova~\cite{S} proved that the skew-merged permutations are exactly $\Av(2143,3412)$. Recently, Albert and Vatter~\cite{AV} enumerated the simple skew-merged permutations, noting that precisely half of them contain $2413$ and avoid $3142$ while the other half contain $3142$ and avoid $2413$. (While they were not the first to provide this enumeration, it is their technique which we adapt here.) Remarkably, the simple permutations of $\CC$ coincide exactly with the simple skew-merged permutations which contain $2413$, as we now prove.

\begin{lemma}
	$\Si(\Av(2143,3142,246135)) = \Si(\Av(2143,3142,3412))$
\end{lemma}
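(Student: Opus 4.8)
The plan is to prove the two set inclusions separately, using the structural fact that the simple permutations on both sides are drawn from the skew-merged class. First I would observe that $246135$ itself contains $3412$ (indeed the subsequence $2,6,1,3$ is a $2413$ and the subsequence $4,6,1,3$ is a $3412$), so every permutation avoiding $3412$ in particular must be checked against $246135$ with care — more usefully, I would note that $3412 \not\leq 246135$ is \emph{false}, so $\Av(2143,3142,3412)$ is not literally contained in $\Av(2143,3142,246135)$ at the level of all permutations; the point of the lemma is that the coincidence holds \emph{after restricting to simple permutations}. So the real content is: a simple permutation avoids $\{2143,3142,246135\}$ if and only if it avoids $\{2143,3142,3412\}$.

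For the inclusion $\Si(\Av(2143,3142,3412)) \subseteq \Si(\Av(2143,3142,246135))$, I would argue by contrapositive: suppose $\pi$ is simple, avoids $2143$ and $3142$, but contains $246135$. The goal is to extract a $3412$ (or a $2143$ or $3142$, giving a contradiction with the hypotheses). Writing an occurrence of $246135$ as entries in positions $p_1 < \cdots < p_6$ with values in the relative order $2,4,6,1,3,5$, I would look at the sub-pattern on positions $\{p_2,p_3,p_4,p_5\}$, whose values are in order $4,6,1,3$, i.e. a $3412$. Hence $\pi$ already contains $3412$. This shows $246135 \leq$-contains $3412$, so that $\Av(\ldots,3412) \subseteq \Av(\ldots, 246135)$ trivially — wait, that inclusion goes the wrong way for avoidance. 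Let me restate: since $3412 \leq 246135$, any permutation containing $3412$-free... no: $3412 \leq 246135$ means containing $246135$ implies containing $3412$, hence \emph{avoiding} $3412$ implies \emph{avoiding} $246135$, so $\Av(2143,3142,3412) \subseteq \Av(2143,3142,246135)$ at the level of \emph{all} permutations, not just simple ones. So one inclusion of the lemma is immediate and does not even need simplicity.

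The substantive direction is $\Si(\Av(2143,3142,246135)) \subseteq \Si(\Av(2143,3142,3412))$: I must show a simple permutation avoiding $\{2143,3142,246135\}$ also avoids $3412$. Equivalently, a simple permutation containing $3412$ and avoiding $\{2143,3142\}$ must contain $246135$. This is where simplicity is essential — there are plenty of non-simple permutations (e.g. $3412$ itself) avoiding $2143$ and $3142$ without containing $246135$. The strategy: fix an occurrence of $3412$ in the simple permutation $\pi$, say at positions $a<b<c<d$ with values $v_a > v_b$, $v_c < v_d$, $v_b > v_d$... (the pattern $3412$ has values $3,4,1,2$, so $v_a=3, v_b=4, v_c=1, v_d=2$ in relative order, meaning $v_c < v_d < v_a < v_b$). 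Because $\pi$ is simple, neither the block of positions $\{a,\ldots,d\}$ nor the block of values between $v_c$ and $v_b$ forms an interval, so there must be "separating" entries — an entry outside the value-range $[v_c, v_b]$ sitting positionally inside, or an entry outside $[a,d]$ positionally but with value inside the range, or an entry breaking the block $\{b, c\}$ (whose values $v_c, v_b$ sandwich everything). I would do a careful case analysis on where such a separating entry can lie relative to the $3412$, at each step either directly building the $246135$ occurrence by adjoining the new entry (the pattern $246135$ is exactly a $3412$ on positions $2$–$5$ with a "$2$" prepended low-and-early and a "$5$" appended high-and-late — reading $246135$: the entries $2$ at position $1$ and $5$ at position $6$ flank the central $4,6,1,3$) or deriving a forbidden $2143$ or $3142$. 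The main obstacle will be organizing this case analysis: simplicity gives a separating point but it could break the interval either in position or in value and on either side, and I expect roughly four to six cases, several of which will need a secondary application of simplicity (iterating: once we have more structure we find yet another separator) before a clean $246135$ emerges. I would structure the argument by first using simplicity to find an entry whose value lies strictly between $v_c=\min$ and $v_b=\max$ of the $3412$ but whose position lies outside $\{a,\ldots,d\}$ — handling the two sub-cases (to the left of $a$, or to the right of $d$) and the two sub-sub-cases on where its value falls relative to $v_a, v_d$ — and then, if no such entry exists, deduce that $\{a,\ldots,d\}$ actually is an interval of positions, contradicting simplicity unless $d = a+3$, after which I peel off by looking just to the left/right of the $3412$ block and repeat. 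Throughout, the avoidance of $2143$ and $3142$ will sharply constrain the relative order of any new entry, which is what forces the specific shape $246135$ rather than some other length-$6$ pattern.
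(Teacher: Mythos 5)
Your overall architecture matches the paper's: the inclusion $\Si(\Av(2143,3142,3412)) \subseteq \Si(\Av(2143,3142,246135))$ is immediate because $3412 \leq 246135$ (the paper dispatches it in one line, exactly as you eventually do after untangling the direction of the containment), and the substantive content is that a simple permutation avoiding $2143$, $3142$, and $246135$ cannot contain $3412$, which both you and the paper attack by fixing a $3412$ occurrence and using simplicity to produce separating entries whose placement is constrained by the forbidden patterns. However, your write-up stops exactly where the lemma's actual content begins: the case analysis is only announced (``I would do a careful case analysis\dots I expect roughly four to six cases\dots''), not performed. Nothing in your text verifies that every placement of a separating entry either creates $2143$, $3142$, or $246135$, or fails to destroy the relevant interval; that verification \emph{is} the proof, and without it the hard inclusion is unproved.

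Two concrete points would be needed to close the gap, and both appear in the paper's argument. First, choose the $3412$ occurrence extremally (the ``3'' leftmost, the ``4'' topmost for that ``3'', the ``1'' bottommost, the ``2'' rightmost); this guarantees certain regions of the diagram are empty and keeps the case analysis finite and manageable, whereas an arbitrary occurrence (as in your sketch) leaves many more configurations to rule out. Second, the paper's conclusion is not that a $246135$ is directly exhibited, but a contradiction with simplicity: one shows that separating the ``$12$'' formed by the $1$ and $2$ of the occurrence forces an entry to the left of the ``$12$'' formed by the $3$ and $4$, after which every admissible insertion (those not creating a basis element, with $246135$-avoidance supplying some of the forbidden cells) only enlarges an interval that can never be split, so $\sigma$ retains a nontrivial interval. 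Your plan conflates these two possible endgames (``either building the $246135$ \dots or deriving a forbidden $2143$ or $3142$''), which is logically equivalent in principle, but you have not carried out either version, and the iterated use of simplicity you allude to (``once we have more structure we find yet another separator'') is precisely the delicate step — it must terminate in a contradiction rather than an unbounded regress, which the paper handles by showing the remaining interval admits no legal separator at all.
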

\begin{proof}
	Since $246135$ contains $3412$, it is trivial that
		\[\Si(\Av(2143,3142,246135)) \supseteq \Si(\Av(2143, 3142, 3412)).\]
	We will now prove the reverse inclusion. Suppose toward a contradiction that \linebreak $\sigma \in \Si(\Av(2143,3142,246135))$ and that $\sigma$ contains $3412$. Without loss of generality, we may pick the $3412$ pattern so that the $3$ is as leftmost as possible, the $4$ is as topmost as possible (i.e., has value as great as possible) for the chosen $3$, the $1$ is as bottommost as possible (i.e., has value as small as possible) for the chosen $3$ and $4$, and the 2 is as rightmost as possible for the given $3$, $4$, and $1$. 
	
	The figure below shows this situation as illustrated by a \emph{permutation diagram}, generated using Albert's \emph{PermLab}~\cite{APL} application. The dots represent permutation entries while the squares represent possible insertion locations for future entries: white squares represent valid insertion locations, dark gray squares represent insertion locations  that are forbidden because such an insertion would form a basis element, and light gray squares represent insertion locations that we have assumed are empty (for example, by assuming that the $3$ is as leftmost as possible).
	\begin{center}
		%Tikz output
		\definecolor{light-gray}{gray}{0.5}
		\definecolor{dark-gray}{gray}{0.35}\begin{tikzpicture}[scale=.5]
		%Forbidden regions
		\filldraw[light-gray](0,2) rectangle (1,3);
		\filldraw[light-gray](0,3) rectangle (1,4);
		\filldraw[dark-gray](1,0) rectangle (2,1);
		\filldraw[dark-gray](1,1) rectangle (2,2);
		\filldraw[light-gray](1,4) rectangle (2,5);
		\filldraw[light-gray](2,0) rectangle (3,1);
		\filldraw[light-gray](2,4) rectangle (3,5);
		\filldraw[light-gray](3,0) rectangle (4,1);
		\filldraw[dark-gray](3,3) rectangle (4,4);
		\filldraw[dark-gray](3,4) rectangle (4,5);
		\filldraw[light-gray](4,1) rectangle (5,2);
		\filldraw[light-gray](4,2) rectangle (5,3);
		%Points
		\draw[black, fill=black] (1,3) circle (0.2);
		\draw[black, fill=black] (2,4) circle (0.2);
		\draw[black, fill=black] (3,1) circle (0.2);
		\draw[black, fill=black] (4,2) circle (0.2);
		%Gridlines
		\draw[thick](0,0)--(0,5);
		\draw[thick](0,0)--(5,0);
		\draw[thick](1,0)--(1,5);
		\draw[thick](0,1)--(5,1);
		\draw[thick](2,0)--(2,5);
		\draw[thick](0,2)--(5,2);
		\draw[thick](3,0)--(3,5);
		\draw[thick](0,3)--(5,3);
		\draw[thick](4,0)--(4,5);
		\draw[thick](0,4)--(5,4);
		\draw[thick](5,0)--(5,5);
		\draw[thick](0,5)--(5,5);
		\end{tikzpicture}
	\end{center}
	
As $\sigma$ is a simple permutation, the two entries in the bottom-right must be separated by an entry in either the white square directly above them, the white square directly to their left, or the white square to their far left. One can quickly see that any attempt to split them using the two adjacent white cells will eventually require a  entry located to the left of the $12$ interval in the top-left of the permutation diagram. This additional entry is shown in the following permutation diagram.

	\begin{center}
		%Tikz output
		\definecolor{light-gray}{gray}{0.5}
		\definecolor{dark-gray}{gray}{0.35}\begin{tikzpicture}[scale=.5]
		%Forbidden regions
		\filldraw[light-gray](0,1) rectangle (1,2);
		\filldraw[dark-gray](0,2) rectangle (1,3);
		\filldraw[dark-gray](0,3) rectangle (1,4);
		\filldraw[dark-gray](0,4) rectangle (1,5);
		\filldraw[dark-gray](1,0) rectangle (2,1);
		\filldraw[dark-gray](1,1) rectangle (2,2);
		\filldraw[light-gray](1,3) rectangle (2,4);
		\filldraw[light-gray](1,4) rectangle (2,5);
		\filldraw[dark-gray](2,0) rectangle (3,1);
		\filldraw[dark-gray](2,1) rectangle (3,2);
		\filldraw[dark-gray](2,2) rectangle (3,3);
		\filldraw[light-gray](2,5) rectangle (3,6);
		\filldraw[light-gray](3,0) rectangle (4,1);
		\filldraw[light-gray](3,5) rectangle (4,6);
		\filldraw[light-gray](4,0) rectangle (5,1);
		\filldraw[dark-gray](4,3) rectangle (5,4);
		\filldraw[dark-gray](4,4) rectangle (5,5);
		\filldraw[dark-gray](4,5) rectangle (5,6);
		\filldraw[dark-gray](5,1) rectangle (6,2);
		\filldraw[dark-gray](5,2) rectangle (6,3);
		\filldraw[light-gray](5,3) rectangle (6,4);
		\filldraw[dark-gray](5,4) rectangle (6,5);
		%Points
		\draw[black, fill=black] (1,2) circle (0.2);
		\draw[black, fill=black] (2,4) circle (0.2);
		\draw[black, fill=black] (3,5) circle (0.2);
		\draw[black, fill=black] (4,1) circle (0.2);
		\draw[black, fill=black] (5,3) circle (0.2);
		%Gridlines
		\draw[thick](0,0)--(0,6);
		\draw[thick](0,0)--(6,0);
		\draw[thick](1,0)--(1,6);
		\draw[thick](0,1)--(6,1);
		\draw[thick](2,0)--(2,6);
		\draw[thick](0,2)--(6,2);
		\draw[thick](3,0)--(3,6);
		\draw[thick](0,3)--(6,3);
		\draw[thick](4,0)--(4,6);
		\draw[thick](0,4)--(6,4);
		\draw[thick](5,0)--(5,6);
		\draw[thick](0,5)--(6,5);
		\draw[thick](6,0)--(6,6);
		\draw[thick](0,6)--(6,6);
		\end{tikzpicture}
	\end{center}
	
	Such an entry now limits the ways that we may split the remaining $12$ interval. Entries placed in the white squares directly below or to the right of the $12$ interval will only create larger intervals that cannot be separated. This implies that $\sigma$ will always contain a nontrivial interval, which contradicts the assumption that $\sigma$ is simple.
\end{proof}

In describing a permutation class, it is often helpful to consider four different types of permutations: the single permutation of length $1$, the sum decomposable permutations, the skew decomposable permutations, and the permutations that are inflations of simple permutations of length at least $4$.

The previous lemma allows us to count the permutations in $\CC$ which are inflations of simple permutations of length at least 4. Consider the simple permutation of $\CC$ shown below. The black entries are part of the increasing subsequence, the gray entries are part of the decreasing subsequence, and the circled entry is the \emph{central point} which may or may not exist in a simple skew-merged permutation.

	\begin{center}
		\begin{tikzpicture}[scale=.2,baseline=(current bounding box.center)]
			\draw[ultra thick] (1,0) -- (16,0);
			\draw[ultra thick] (0,1) -- (0,16);
			\foreach \x in {1,...,16} {
				\draw[thick] (\x,.09)--(\x,-.5);
				\draw[thick] (.09,\x)--(-.5,\x);
			}
			\draw[fill=black] (1,3) circle (8pt);
			\draw[gray, fill=gray] (2,16) circle (8pt);
			\draw[fill=black] (3,4) circle (8pt);
			\draw[fill=black] (4,6) circle (8pt);
			\draw[gray, fill=gray] (5,14) circle (8pt);
			\draw[fill=black] (6,7) circle (8pt);
			\draw[gray, fill=gray] (7,13) circle (8pt);
			\draw[fill=black] (8,8) circle (8pt);
			\draw[gray, fill=gray] (9,12) circle (8pt);
			\draw[fill=black] (10,9) circle (8pt);
			\draw (10,9) circle (15pt);
			\draw[gray, fill=gray] (11,5) circle (8pt);
			\draw[fill=black] (12,10) circle (8pt);
			\draw[gray, fill=gray] (13,2) circle (8pt);
			\draw[fill=black] (14,11) circle (8pt);
			\draw[gray, fill=gray] (15,1) circle (8pt);
			\draw[fill=black] (16,15) circle (8pt);
		\end{tikzpicture}
	\end{center}
	
Entries in the increasing subsequence can only be inflated by increasing permutations, since a decrease would form a $2143$ pattern. Entries in the decreasing subsequence and the central point can all be inflated by any permutation in the class.

The generating function for the simple skew-merged permutations given by Albert and Vatter does not distinguish between the entries in the increasing and decreasing subsequences. Fortunately, we can easily adapt their method to count each type of entry separately, in the same way that B\'ona, Homberger, Pantone, and Vatter~\cite{BHPV} adapted the method Albert and Vatter used to count $321$-avoiding simple permutations, distinguishing between entries in each of the two increasing subsequences. Performing this calculation, which we omit due to its similarity to the aforementioned references, one finds that the simple $3142$-avoiding skew-merged permutations are counted by the generating function
	\[s(u,v) = \frac{2u^2v^2(1+v)}{1 - 2uv(u+2) - uv^2(u+2) + (1-uv)\sqrt{1-2uv(2u+3) - uv^2(3u+4)}},\]
where $u$ counts entries in the increasing subsequence (omitting a central point if one exists) and $v$ counts entries in the decreasing subsequence (including a central point if one exists). Let $f$ denote the generating function for the class $\CC$. By our earlier observations about the allowed inflations, we conclude that the permutations in $\CC$ which are inflations of the simple permutations of length at least $4$ are counted by 
	\[s\left(\frac{x}{1-x},f\right).\]
	
	It only remains to count the sum and skew decomposable permutations of $\CC$. Let $f_\oplus$ and $f_\ominus$ denote the sum decomposable and skew decomposable permutations in $\CC$, respectively. As in the case studied by Albert and Vatter~\cite{AV}, the sum decomposable permutations in this class all have the form $1 \oplus \pi$ or $\pi \oplus 1$. Counting these, and making sure not to double-count those permutations of the form $1 \oplus \pi \oplus 1$, we see that
		\[f_\oplus = 2xf - x^2(f+1).\]
	
	Moreover, the class $\CC$ is closed under the operation of skew sum (which is readily derived from the fact that $\CC$ has no skew decomposable basis elements) from which it follows in the usual manner that
		\[f_\ominus = \frac{f^2}{1+f}.\]
	
	Combining these results, $f$ satisfies the functional equation
		\begin{equation} \label{eq:f-fe}
		\begin{split}
			f &= x + f_\oplus + f_\ominus + s\left(\frac{x}{1-x},f\right)\\
			&= x + 2xf - x^2(f+1) + \frac{f^2}{1+f} + s\left(\frac{x}{1-x},f\right).
		\end{split}
		\end{equation}

	It can then be verified by a computer algebra system that one solution to \eqref{eq:f-fe} is
		\[f = \frac{1-x-\sqrt{1-6x+x^2}}{2},\]
	which is the generating function for the large Schr\"oder numbers. In fact, $f$ is the unique formal power series solution to \eqref{eq:f-fe}. To see this, note that substituting $x=0$ into \eqref{eq:f-fe} implies that any solution $f$ must have constant term $0$. It follows then that the coefficient of $x^n$ in the power series of $f$ at $x=0$ is uniquely determined by the coefficients of $x^1, x^2, \ldots, x^{n-1}$. \label{fact:unique}
	
	Therefore, the permutation class $\Av(2143,3142,246135)$ is counted by the large Schr\"oder numbers, and hence is Wilf-equivalent to $\Av(2413,3142)$.

\section{Some Conjectures} \label{sec:conj}

There appear to be more Wilf-equivalences between a singleton pattern and a set of more than one pattern. In fact, we conjecture the following Wilf-equivalence.

\begin{conjecture} \label{conj:2143-246135}
$\{2143,246135\}\sim 2413$.
\end{conjecture}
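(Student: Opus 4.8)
\medskip
\noindent\textbf{A possible approach to Conjecture~\ref{conj:2143-246135}.}
The plan is to adapt the method used for Theorem~\ref{thm:main2}: first obtain a structural description of $\CC' := \Av(2143,246135)$ via the substitution decomposition (Proposition~\ref{lemma:decomposition}), then convert it into a functional equation for the generating function $g$ of $\CC'$, and finally check with a computer algebra system that $g$ agrees with the generating function of $\Av(2413)$ --- equivalently, with B\'ona's generating function
\[
	\frac{32x}{1+20x-8x^2-(1-8x)^{3/2}}
\]
for $\Av(1342)$, since $1342$ and $2413$ are Wilf-equivalent. As in Theorem~\ref{thm:main2}, evaluating the functional equation at $x=0$ forces any formal power series solution to be unique, so it suffices to verify that this series solves the equation.

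By Proposition~\ref{lemma:decomposition}, each permutation of $\CC'$ is the permutation $1$, sum decomposable, skew decomposable, or an inflation of a unique simple permutation of length at least $4$. The first three contributions should be computed exactly as in the proof of Theorem~\ref{thm:main2}. Both $2143$ and $246135$ are sum indecomposable and skew indecomposable; from this one deduces that $\CC'$ is closed under skew sums, so $g_\ominus = g^2/(1+g)$, and that a sum decomposable member of $\CC'$ cannot be the direct sum of two permutations each containing $21$ (that would produce a $2143$), so every sum decomposable member of $\CC'$ has the form $1\oplus\pi$, $\pi\oplus 1$, or $1\oplus\pi\oplus 1$, giving $g_\oplus = 2xg - x^2(g+1)$.

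The crucial --- and substantially harder --- step is to describe and enumerate $\Si(\CC')$ while recording which inflations are permitted. Unlike in Theorem~\ref{thm:main2}, the simple permutations of $\CC'$ are \emph{not} all skew-merged: for example $463152$ is simple, avoids $2143$ and $246135$, yet contains $3412$ and has no splitting into an increasing and a decreasing subsequence. I would expect that a simple permutation of $\CC'$ consists of a skew-merged-type skeleton (an ascending staircase and a descending staircase, possibly with a central point, as in Albert--Vatter~\cite{AV}) together with a bounded amount of extra structure, with the avoidance of $246135$ being exactly what keeps that extra structure finite. For the inflation bookkeeping the key point is that there are only two kinds of entries: an entry lying strictly to the upper-right, or strictly to the lower-left, of some descent of the simple permutation may only be inflated by an increasing permutation (otherwise a $2143$ is created), while every other entry may be inflated by an arbitrary member of $\CC'$. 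That the latter inflations are harmless can be checked cleanly because $246135$ is itself simple, so the entries coming from a nontrivial inflation of a single entry can never by themselves complete a new copy of $246135$. Following the refinement of the Albert--Vatter enumeration carried out by B\'ona--Homberger--Pantone--Vatter~\cite{BHPV}, one would then produce a two-variable generating function $S(u,v)$ for $\Si(\CC')$, with $u$ marking the increasing-only entries and $v$ marking the freely-inflatable ones.

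Given such an $S$, the permutations of $\CC'$ that are inflations of long simple permutations are counted by $S\!\left(\tfrac{x}{1-x},\,g\right)$, and $g$ satisfies a functional equation of the same shape as~\eqref{eq:f-fe}, namely
\[
	g = x + \bigl(2xg - x^2(g+1)\bigr) + \frac{g^2}{1+g} + S\!\left(\tfrac{x}{1-x},\,g\right),
\]
whereupon it remains to check by computer algebra that B\'ona's series above (suitably normalized for the empty permutation) is a solution. The hard part, I expect, will be the characterization and enumeration of $\Si(\CC')$ and the careful tracking of the descent structure that governs the allowed inflations; once those are in hand, the remaining steps are routine adaptations of the argument already given for Theorem~\ref{thm:main2}.
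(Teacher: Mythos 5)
The statement you are addressing is stated in the paper only as Conjecture~\ref{conj:2143-246135}; the authors do not prove it, so there is no proof to compare against, and your proposal does not close that gap: it is a program whose central step is missing. Everything hinges on characterizing and enumerating $\Si(\Av(2143,246135))$ together with the admissible inflations, and you explicitly leave this open, offering only the hope that these simples are a skew-merged-type skeleton ``plus a bounded amount of extra structure.'' Your own (correct) example $463152$ shows that the analogue of the key lemma behind Theorem~\ref{thm:main2} fails here --- the simples of this class are not contained in the skew-merged permutations --- so the Albert--Vatter computation cannot be imported, and nothing in your sketch establishes that the simples even admit a two-variable enumeration of the proposed shape $S(u,v)$, i.e.\ that exactly two types of entries suffice. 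Moreover, the inflation bookkeeping requires a sufficiency argument you have not supplied: your ``at most one entry per block'' reasoning does dispose of $246135$ (it is simple), but $2143 = 21 \oplus 21$ is not, so one must show that the designated entries can be \emph{simultaneously} inflated by arbitrary members of the class without two inflated blocks, or a block and an inversion of the skeleton, jointly creating a $2143$; whether this works depends on the unknown structure of the simples. Until those steps are done, the functional equation you write down is not defined and the final computer-algebra verification cannot be performed.

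Two smaller points. Your assertion that $2143$ is sum indecomposable is false ($2143 = 21 \oplus 21$); fortunately your derivations of $g_\oplus = 2xg - x^2(g+1)$ and $g_\ominus = g^2/(1+g)$ rely only on the direct observation that a direct sum of two $21$-containing permutations contains $2143$ and on the skew indecomposability of both basis elements, so these stand, as do the choice of target series (the $1342$/$2413$ generating function) and the uniqueness-of-solution argument copied from page~\pageref{fact:unique}. But as written the proposal leaves the conjecture exactly where the paper leaves it: open, with the hard structural work still to be done.
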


Since $|\Av_5(\pi)| = 103$ for all $\pi$ with $|\pi|=4$, there can be no $(4)\sim(4,5)$ Wilf-equivalence; for if $|\alpha| = 4$ and $|\beta| = 5$ with $\alpha \not\leq \beta$, then it must follow that $|\Av_5(\alpha,\beta)| = 102$. Computation has verified that, up to symmetries, there are no unbalanced Wilf-equivalences of the form $(4)\sim(4,6)$ or $(4)\sim(4,7)$ other than the $(4)\sim(4,7)$ Wilf-equivalence proved in this paper and the $(4)\sim(4,6)$ Wilf-equivalence conjectured above. It follows that there are no other $(4)\sim(4,k)$ Wilf-equivalences for any $k$.

It would be interesting to give a bijection in the proof of Conjecture \ref{conj:2143-246135} that would also preserve 3142-avoidance. This would yield another proof that
\[
\{2143,3142,246135\}\sim \{2413,3142\}.
\]

In fact, Egge \cite{Egge} has conjectured that $\{2143,3142,\pi\} \sim \{2413,3142\}$ for any permutation $\pi$ among
\[
246135,\ 254613,\ 263514,\ 362415,\ 461325,\ 524361,\ 546132,\ 614352.
\]

Additionally, the authors discovered after submission that Jel\'{i}nek~\cite{Jelinek}, in his Doctoral Thesis, proved that $1234 \sim \{1324,3416725\}$ using largely similar methods. However, this result was never published. Moreover, Jel\'{i}nek~\cite{Jelinek-private} has indicated that he has recently proved the following generalization: 
	\[\{\sigma \oplus 12 \oplus \tau\} \sim \left(\{\sigma \oplus 21 \oplus \tau\} \cup B\right),\]
where $B$ is a (necessarily finite) set of permutations of length at most $2|\sigma| + 2|\tau| + 3$.

\section*{Acknowledgement}

The authors would like to thank Mikl\'{o}s B\'{o}na for his close reading of the paper, helpful comments in clarifying the exposition, and pointing out the connection of this paper to~\cite{AMR}. The authors are also grateful to the anonymous referees for their useful suggestions in improving the clarity of the paper.

\end{document}